\newtheorem{theorem}{Theorem}[section]
\newtheorem{corollary}[theorem]{Corollary}
\newtheorem{lemma}[theorem]{Lemma}
\newtheorem{proposition}[theorem]{Proposition}
\theoremstyle{definition}
\newtheorem{example}[theorem]{Example}
\theoremstyle{remark}
\newtheorem{remark}[theorem]{Remark}
\def\contFracOpe{%
    \operatornamewithlimits{%
        \mathchoice{
            \vcenter{\hbox{\huge $\mathcal{K}$}}%
        }{
            \vcenter{\hbox{\Large $\mathcal{K}$}}%
        }{
            \mathrm{\mathcal{K}}%
        }{
            \mathrm{\mathcal{K}}%
        }
    }
}
\newcommand{\subplus}{\mathbin{\genfrac{}{}{0pt}{}{}{+}}}
\newcommand{\subminus}{\mathbin{\genfrac{}{}{0pt}{}{}{-}}}
\newcommand{\subcdots}{\genfrac{}{}{0pt}{}{}{\cdots}}
\begin{document}

\title{A simple continued fraction expansion for $e^n$}

\author{Cid Reyes-Bustos}

\date{\today}

\begin{abstract}
  In this paper we present a family of continued fraction expansions for $e^n$, with $n\ge 1$, with a simple expression having partial
  denominators given by arithmetic progressions. We give an estimate for the convergence speed showing that the convergence is
  faster than the corresponding regular continued fractions.
  Moreover, we prove that the continued fractions for $e^n$ given in this paper are special cases of continued fraction expansions,
  different from the standard ones, of the confluent hypergeometric function,
  or equivalently, of the incomplete gamma function. In addition, using the same method we give a related family of continued
  fraction expansions of $e^{\ell/n}$ for positive integers \(1\leq\ell< n \) that contains the case of integral exponent as a limit case. 
\end{abstract}


\subjclass[2010]{Primary 11A55;
  Secondary 30B70}
\keywords{Continued fraction expansion, Confluent hypergeometric function, Incomplete gamma function}

\maketitle

\section{Introduction}

There are several well-known continued fraction expansions for the Euler number \(e\), the base of the natural logarithm, and its
integral and rational powers. For instance, the continued fractions
\begin{align*}
  e &= 2 + \frac{1}{1} \subplus \frac{1}{2} \subplus \frac{1}{1} \subplus \frac{1}{1} \subplus \subcdots = \left[2; \overline{1,2n,1}\right]_{n=0}^{\infty}, \\
  e &= 2 + \frac{2}{2} \subplus \frac{3}{3} \subplus \frac{4}{4} \subplus \frac{5}{5} \subplus \subcdots, \\
  e &= 1 + \frac{2}{1} \subplus \frac{1}{6} \subplus \frac{1}{10} \subplus \frac{1}{14} \subplus \subcdots. 
\end{align*}
can be found in Appendix A of \cite{Lorentzen2008}. For the rational powers we have, for example, the continued fraction
\[
  e^2 = 7 + \frac{1}{2} \subplus \frac{1}{1} \subplus \frac{1}{1} \subplus \frac{1}{3} \subplus \subcdots = \left[7;\overline{3n+2,1,1,3n+3,12n+18}\right]_{n=0}^{\infty},
\]
and, for \(M>1\), 
\begin{equation}
  \label{eq:e1m}
  e^{1/M} = 1 + \frac{1}{M-1} \subplus \frac{1}{1} \subplus \frac{1}{1} \subplus \frac{1}{3M-1} \subplus \subcdots,
\end{equation}
along with a related continued fraction for \(e^{2/M} \), obtained in \cite{McCabe2009} by the use of the
quotient-difference algorithm for Padé tables (see \cite{McCabe1983}). 

In this paper, as an starting point we consider the continued fraction
\begin{equation}
  \label{eqn:contfrac1}
  e = 3 - \frac{1}{4} \subminus \frac{2}{5} \subminus \frac{3}{6} \subminus \frac{4}{7} \subminus \subcdots,
\end{equation}
discovered algorithmically in \cite{Raayoni2019} and proved in \cite{Lu2019} (see also \cite{DZ2021} and \cite{RB2018}). By comparison of the initial convergents, it is clear that the continued fraction \eqref{eqn:contfrac1} is not obtained from a equivalence transformation of any of the above continued fractions.

We give an alternate proof of the continued fraction expansion \eqref{eqn:contfrac1} by establishing the formula
\begin{equation}
  \label{eq:contfracen}
    e^n = \sum_{k=0}^{n-1} \frac{n^k}{k!} + \frac{n^{n-1}}{(n-1)!} \left( 1 + n +
    \contFracOpe_{m=1}^{\infty} \left(\frac{-n(m+n-1)}{m+2 n + 1}\right) \right),
\end{equation}
valid for \(n \in \mathbb{N}\). Here, the notation
\[
  \contFracOpe_{m=1}^{\infty} \left(\frac{a_m}{b_m}\right) := \frac{a_1}{b_1} \subplus \frac{a_2}{b_2} \subplus \frac{a_3}{b_3} \subplus \subcdots
\]
is used for the continued fraction with partial numerators \(a_n\) and partial denominators \(b_n\).

The convergents of the continued fractions \eqref{eq:contfracen} have certain interesting properties. We illustrate this with the case \(n=1\). Denote by \( C_k = P_k/Q_k \)  the \(k\)-th convergent of (\ref{eqn:contfrac1}). The first convergents
of \eqref{eqn:contfrac1} are given by
\begin{align*}
  C_0 = 3, \quad C_1 = \frac{11}{4}, \quad  C_2 = \frac{49}{18}, \quad C_3= \frac{87}{32}, \quad C_4 = \frac{1631}{600}, \quad C_5 = \frac{11743}{4320}.
\end{align*}
Note that since the partial numerators of \eqref{eqn:contfrac1} are negative, the convergents are monotonically decreasing. The difference between the first successive convergents is given by
\begin{align*}
 C_1 - C_0 = \frac{-1}{4}, \quad C_2 - C_1 = \frac{-1}{36}, \quad C_3 - C_2 = \frac{-1}{784}, \quad C_4 - C_3 = \frac{-1}{2400}.
\end{align*}
In general (see Proposition \ref{prop:diff}), the absolute value of the numerators of the differences is \(1\), similar to the case of regular continued fractions. 

The convergence of the continued fraction is faster than the regular continued fraction. In fact,
in Corollary \ref{cor:convspeed}, we see that
\[
  \left|e - C_k \right| = O\left(\frac{1}{k! (k+1)^2(k+2)^2 }\right),
\]
for \(k \in \mathbb{N}\). In \S2 we prove these properties by establishing the corresponding ones for the continued fractions \eqref{eq:contfracen}.

The continued fraction \eqref{eq:contfracen} in general cannot be extended to a continued fraction for \(e^z\) where \(z\) is a complex variable (or even a real number different of \(z=n \in \mathbb{N}\)). In fact, in \S 3 we show that the appropriate extension of \eqref{eq:contfracen} to complex variable (Theorem \ref{thm:incgamma}) corresponds to a continued fraction expansion of the incomplete gamma function, or equivalently, of the confluent hypergeometric function, that is different from the standard ones. 


Finally, in \S \ref{sec:extension}, by extending the method used for \eqref{eq:contfracen}, we show the convergence of
the family of continued fractions
\[
  1 + 2n + \contFracOpe_{m=1}^{\infty} \left( \frac{-\, n m}{1+ n (m + 2)} \right) = \frac{e^{\frac{1}{n}}}{n-(n-1)e^{\frac{1}{n}}},
\]
valid for $n>0$. Note that the case $n=1$ gives \eqref{eqn:contfrac1}, and thus this family is another generalization of \eqref{eqn:contfrac1}. In general, we give a related two parameter family of continued fractions for $e^{\ell/n}$, with \(1\leq\ell<n\), that includes \eqref{eq:contfracen} as a limit case.


We remark that the continued fraction \eqref{eqn:contfrac1} was discovered (independently of \cite{Raayoni2019}) by the author in 2018 (see Chapter 5 in \cite{RB2018}) during the study of orthogonal polynomials related to solutions of certain second order differential operator of confluent Heun type related to a model used in quantum optics (the asymmetric quantum Rabi model (AQRM), see e.g. \cite{KRW2017,CRB2020}).
In \cite{DZ2021}, the authors prove the convergence of continued fractions related to \eqref{eqn:contfrac1} by presenting a family of continued fractions equivalent to the well-known M-fraction of the confluent hypergeometric function. In Remark \ref{rem:other} we compare our results with the ones given in \cite{DZ2021}.

\section{The continued  fraction expansion of \( e^n\) }
\label{sec:cont}

In this section we prove the convergence of the continued fraction expansion (\ref{eq:contfracen}) and provide an estimate for its convergence speed. 

\begin{theorem}
  \label{thm:exp_cont}
  For \( n \in \mathbb{N}\) we have
  \[
    e^n = \sum_{k=0}^{n-1} \frac{n^k}{k!} + \frac{n^{n-1}}{(n-1)!} \left( 1 + n +
    \contFracOpe_{m=1}^{\infty} \left(\frac{-n(m+n-1)}{m+2 n + 1}\right) \right)
  \]
\end{theorem}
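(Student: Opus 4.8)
The plan is to reduce the statement to a continued fraction identity for a single value of the confluent hypergeometric series, and then to obtain that identity from a three-term recurrence together with a minimality (Pincherle-type) argument. First I would split the exponential series as
\[
  e^n = \sum_{k=0}^{n-1}\frac{n^k}{k!} + \sum_{k=n}^{\infty}\frac{n^k}{k!},
\]
and rewrite the tail. Factoring out \(n^n/n!\) and reindexing with \(j=k-n\) gives \(\sum_{k=n}^\infty n^k/k! = \tfrac{n^n}{n!}\sum_{j\ge 0} n^j/(n+1)_j\), where \((n+1)_j\) is the Pochhammer symbol. Since \(n^n/n! = n^{n-1}/(n-1)!\), and \(\sum_{j\ge0} n^j/(c)_j = {}_1F_1(1;c;n) =: G_c\), the tail is exactly \(\tfrac{n^{n-1}}{(n-1)!}\,G_{n+1}\). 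Thus the theorem is equivalent to the scalar identity
\[
  G_{n+1} = 1 + n + \contFracOpe_{m=1}^{\infty}\left(\frac{-n(m+n-1)}{m+2n+1}\right).
\]

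Next I would produce the recurrence underlying the continued fraction. Reindexing the series directly yields the first-order relation \(G_c = 1 + \tfrac{n}{c}\,G_{c+1}\); eliminating the inhomogeneous term between the instances at \(c\) and \(c+1\) gives the homogeneous three-term recurrence
\[
  c(c+1)\,G_c - (c+1)(c+n)\,G_{c+1} + nc\,G_{c+2} = 0 .
\]
Dividing by \(G_{c+1}\) turns this into a recursion for the ratios \(\rho_c = G_{c+1}/G_c\), hence into a continued fraction; specializing to \(c = n+m\) and applying an equivalence transformation with suitably chosen scaling factors \(r_m\) (which rescale \(a_m \mapsto r_m r_{m-1} a_m\), \(b_m \mapsto r_m b_m\) without changing the value) should align the partial numerators and denominators with \(-n(m+n-1)\) and \(m+2n+1\). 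The additive term \(1+n\) is then extracted by peeling off the first step of the relation \(G_{n+1} = 1 + \tfrac{n}{n+1}G_{n+2}\); indeed one checks the self-similar identity \(G_{n+1} - 1 - n = \tfrac{n}{n+1}\bigl(G_{n+2}-(n+1)\bigr)\), which launches the recursion at the correct index.

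Finally I would secure genuine convergence to the stated value. Because \(G_c \to 1\) as \(c\to\infty\) while the companion solution of the three-term recurrence grows super-exponentially (as is visible from the leading behaviour \(G_{c+2}\approx \tfrac{c}{n}(G_{c+1}-G_c)\)), the sequence \(\{G_{n+m}\}_m\) is the minimal (recessive) solution. Pincherle's theorem then guarantees both that the continued fraction converges and that its value is the ratio of consecutive minimal-solution terms, which unwinds to \(G_{n+1}-1-n\); combining with the series computation gives the theorem. I expect the main obstacle to be precisely this convergence step: the partial numerators \(-n(m+n-1)\) are negative, so the elementary positivity-based convergence criteria do not apply, and one must instead verify minimality rigorously (the super-exponential growth estimate for the dominant solution) so that Pincherle's theorem yields convergence to the correct limit rather than a merely formal identity. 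A secondary, bookkeeping-heavy difficulty is tracking the equivalence transformation so that the coefficients match the exact normalization \(-n(m+n-1)\) and \(m+2n+1\); as an alternative to Pincherle one could derive closed forms for the convergents \(P_k,Q_k\) and estimate \(G_{n+1}-P_k/Q_k\) directly, which would also feed into the convergence-speed statements used later in the section.
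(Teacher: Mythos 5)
Your opening reduction is correct and matches the paper's in substance: writing $G_c := {}_1F_1(1;c;n)=\sum_{j\ge 0}n^j/(c)_j$, the tail of the exponential series is $\tfrac{n^{n-1}}{(n-1)!}G_{n+1}$, so Theorem~\ref{thm:exp_cont} is equivalent to $G_{n+1}=1+n+\contFracOpe_{m=1}^{\infty}\bigl(\tfrac{-n(m+n-1)}{m+2n+1}\bigr)$; the contiguous relation $G_c=1+\tfrac{n}{c}G_{c+1}$ and the three-term recurrence $c(c+1)G_c-(c+1)(c+n)G_{c+1}+nc\,G_{c+2}=0$ are also correct. The fatal step is the claim that this recurrence, via the ratios $\rho_c=G_{c+1}/G_c$ and an equivalence transformation, can be made to have partial numerators $-n(m+n-1)$ and denominators $m+2n+1$. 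It cannot. Unfolding $\rho_c=\frac{c(c+1)}{(c+1)(c+n)-nc\,\rho_{c+1}}$ at $c=n+1$ produces a continued fraction whose \emph{value} is the ratio $G_{n+2}/G_{n+1}$, whereas the target fraction's value is the affine quantity $G_{n+1}-(n+1)$; equivalence transformations preserve values, so no choice of scaling factors $r_m$ can carry one into the other (for $n=1$ the two values are $\approx 0.836$ and $e-3\approx-0.282$). One can also see it on the invariant $a_m/(b_m b_{m-1})$, which equivalence transformations preserve for $m\ge 2$: your ratio fraction gives $\tfrac{-n(m+n-1)}{(m+2n)(m+2n-1)}$, the target gives $\tfrac{-n(m+n-1)}{(m+2n+1)(m+2n)}$. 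What your derivation reconstructs is (a relative of) the standard M-fraction for ${}_1F_1(1;z+1;z)$, and the paper notes explicitly in Section~\ref{sec:ext} that its expansion is \emph{not} an equivalence transform of that M-fraction --- this inequivalence is exactly what makes the theorem nontrivial.

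The same misidentification sinks the Pincherle step. The sequence $\{G_{n+m}\}$ is indeed minimal for \emph{its own} recurrence, but it does not solve (even after diagonal rescaling) the recurrence $y_k=(k+2n+1)y_{k-1}-n(k+n-1)y_{k-2}$ attached to the target fraction, so Pincherle's theorem applied to it says nothing about that fraction. The genuine minimal solution of the target recurrence is $u_k=fQ_k-P_k$ with $f$ the very value being sought (for $n=1$: $u_0=e-3$, $u_1=4e-11$, $u_2=18e-49,\dots$), so exhibiting it in closed form is circular; relatedly, your self-similar identity $G_{n+1}-1-n=\tfrac{n}{n+1}(G_{n+2}-(n+1))$ only recasts the zeroth tail, and the later tails are M\"obius, not affine, functions of the $G$'s, so the pattern does not propagate. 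The paper proceeds in the opposite direction: it uses the explicitly known \emph{dominant} solution $X_k=(k+2)\Gamma(k+n+2)$ of the shifted recurrence (Lemma~\ref{lem:recurr}) to build a tail sequence $t_j=-X_j/X_{j-1}$, and Waadeland's tail theorem then expresses the value through the series $\Sigma_\infty=\sum_k\prod_{j\le k}\tfrac{b_j+t_j}{-t_j}={}_2F_2\left(\begin{matrix}1,&1\\3,&n+2\end{matrix};n\right)$, which is evaluated in Lemma~\ref{lem:hyper} by Euler integral transforms and the Taylor remainder of $e^x$. Some such ${}_2F_2$-type evaluation is the unavoidable transcendental input missing from your plan; your fallback of summing $C_k-C_{k-1}$ runs into it as well, since $P_k$ has no simple closed form (cf.\ the paper's Remark after Corollary~\ref{cor:convspeed}) and, even using $Q_k=\tfrac1n(k+1)(n)_{k+1}$ from Proposition~\ref{prop:diff}, the limit of the telescoping sum is again a ${}_2F_2$-type series that must be evaluated in closed form.
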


We establish the convergence by constructing a tail sequence using an associated recurrence relation
and then using the Waadeland tail theorem (see e.g. Chapter 2 of \cite{Lorentzen2008}).
We make use of some lemmas to prove this result.

\begin{lemma}
  \label{lem:recurr}
  For a fixed \(n \in \mathbb{N}\), the recurrence relation
  \[
    X_k = (k + 2n + 2) X_{k-1} - n(k+n) X_{k-2},
  \]
  has the solution
  \[
    X_k = (k+2)\Gamma(k + n + 2),
  \]
  for \(k \geq 1 \).
\end{lemma}

\begin{proof}
  We verify directly. The right-hand side equals
  \begin{align*}
    (k + 2n + 2) &(k+1)\Gamma(k+n+1) - n (k+n)k\Gamma(k+n) \\
                 &= \Gamma(k+n+1) \left( (k+2n + 2)(k+1) - n k) \right) \\
                 &= \Gamma(k+n+1) (k + n + 1) (k+2) \\
                 &= (k+2)\Gamma(k+n+2),
  \end{align*}
  which is equal to \(X_k\), as desired.
\end{proof}
The next lemma is used for the evaluation of the hypergeometric series that appear
in the computation of the limit by Waadeland's Tail Theorem. We recall the definition
of the incomplete gamma function \(\gamma(s,x) \), namely
\[
  \gamma(s,x) := \int_0^x t^{s-1} e^{-t} d t.
\]
Also we note that, for \(p,q\geq 1 \) the notation \({}_pF_q\left(
    \begin{matrix}
      a_1 ,& a_2, & \ldots & a_p \\ b_1 ,& b_2, & \ldots & b_q
    \end{matrix}\, ;\, x \right) \) is used for the generalized hypergeometric series in the standard way (see e.g. \cite{AAR1999}).

\begin{lemma}
  \label{lem:hyper}
  The formula
  \[
    {}_2F_2\left(
    \begin{matrix}
      1 ,& 1 \\ 3 ,& z+2
    \end{matrix}\, ;\, z \right) = \frac{2(z+1)}{z^2} \left( 1 + z - \frac{\gamma(z,z)}{z^{z-1} e^{-z}} \right),
  \]
  is valid for \(z\) in the cut plane \( z \in \{ z \in \mathbb{C} : |\arg(z)| < \pi  \} \).
  In particular, for  \( n \in \mathbb{N}\) we have
  \[
    {}_2F_2\left(
    \begin{matrix}
      1 ,& 1 \\ 3 ,& n+2
    \end{matrix}\, ;\, n \right) = \frac{2(n+1)}{n^2} \left( 1+ n - \frac{(n-1)!}{n^{n-1}}\left(e^n - \sum_{k=0}^{n-1}\frac{n^k}{k!} \right) \right).
  \]
\end{lemma}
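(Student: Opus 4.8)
The plan is to convert both sides into elementary integrals and reconcile them by a single integration by parts. First I would rewrite the left-hand side using an Euler-type integral representation, splitting off the parameter pair $(1,z+2)$:
\[
{}_2F_2\left(\begin{matrix}1,&1\\3,&z+2\end{matrix};z\right) = \frac{\Gamma(z+2)}{\Gamma(1)\Gamma(z+1)}\int_0^1 (1-t)^{z}\,{}_1F_1\left(1;3;zt\right)\,dt = (z+1)\int_0^1 (1-t)^z\,{}_1F_1(1;3;zt)\,dt,
\]
valid for $\Re z > -1$. Since $(3)_k = (k+2)!/2$, the inner function has the elementary closed form ${}_1F_1(1;3;w) = 2(e^w - 1 - w)/w^2$, whence
\[
{}_2F_2\left(\begin{matrix}1,&1\\3,&z+2\end{matrix};z\right) = \frac{2(z+1)}{z^2}\int_0^1 \frac{(1-t)^z\left(e^{zt}-1-zt\right)}{t^2}\,dt,
\]
the integrand being analytic at $t=0$ because $e^{zt}-1-zt = O(t^2)$.

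Next I would put the incomplete gamma term into the same shape. The substitutions $s = z\tau$ and then $\tau = 1-t$ in $\gamma(z,z) = \int_0^z s^{z-1}e^{-s}\,ds$ give $\gamma(z,z) = z^z e^{-z}\int_0^1 (1-t)^{z-1} e^{zt}\,dt$, so that $\frac{\gamma(z,z)}{z^{z-1}e^{-z}} = z\int_0^1 (1-t)^{z-1}e^{zt}\,dt$ and the right-hand side of the lemma equals $\frac{2(z+1)}{z^2}\bigl(1 + z - z\int_0^1 (1-t)^{z-1}e^{zt}\,dt\bigr)$. It therefore suffices to show that the integral in the left-hand side equals $1 + z - z\int_0^1 (1-t)^{z-1}e^{zt}\,dt$.

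The crux is this last identity, which I would obtain by integrating by parts against $dv = t^{-2}\,dt$, i.e. $v = -1/t$. Both boundary terms vanish: at $t=1$ the factor $(1-t)^z$ kills it when $\Re z > 0$, and at $t=0$ the quotient is $O(t)$. Differentiating the remaining factor and pulling out $(1-t)^{z-1}$, the bracket collapses after cancellation to $t\,(z+1-e^{zt})$, so the integral becomes $z\int_0^1 (1-t)^{z-1}(z+1-e^{zt})\,dt$; using $\int_0^1 (1-t)^{z-1}\,dt = 1/z$ this is exactly $(z+1) - z\int_0^1 (1-t)^{z-1}e^{zt}\,dt$, as required. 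This algebraic collapse is the one computation to carry out carefully, and the vanishing of the boundary terms is the main technical point; everything else is bookkeeping. The identity is thus established for $\Re z > 0$, and since both sides are analytic on the cut plane it extends there by analytic continuation.

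For the specialization to $n \in \mathbb{N}$, I would insert the classical closed form $\gamma(n,n) = (n-1)!\bigl(1 - e^{-n}\sum_{k=0}^{n-1} n^k/k!\bigr)$, which yields $\frac{\gamma(n,n)}{n^{n-1}e^{-n}} = \frac{(n-1)!}{n^{n-1}}\bigl(e^n - \sum_{k=0}^{n-1} n^k/k!\bigr)$; substituting this into the general identity produces the stated formula at once.
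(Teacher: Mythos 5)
Your proof is correct and follows essentially the same route as the paper: Euler's integral transform reduces the \({}_2F_2\) to an elementary integral (the paper derives your closed form of \({}_1F_1(1;3;\cdot)\) by a second Euler transform), a single integration by parts produces the collapse to \(1+z-z\int_0^1(1-t)^{z-1}e^{zt}\,dt\), and the substitution \(s=z(1-t)\) identifies the incomplete gamma function. Your handling of the specialization via the closed form of \(\gamma(n,n)\) is equivalent to the paper's use of the Taylor remainder of \(e^x\), and your explicit treatment of the boundary terms and of the analytic continuation from \(\Re z>0\) to the cut plane is in fact more careful than the paper's.
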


\begin{proof}
  Using the Euler's integral transform two times to the hypergeometric series, we get
  \begin{align*}
    {}_2F_2\left(
    \begin{matrix}
      1 ,& 1 \\ 3 ,& z+2
    \end{matrix}\, ;\, z \right)
       &= (z + 1) \int_0^1 (1-t)^z {}_1F_1\left(
    \begin{matrix}
      1 \\ 3
    \end{matrix}\, ;\, t z \right)    d t \\
       &= 2 (z+1) \int_0^1 (1-t)^z \int_0^1 (1-s) e^{s t z} d s d t \\
       &= \frac{2 (z+1)}{z^2} \int_0^1 \frac{(1-t)^z}{t^2} \left( e^{z t} - z t - 1 \right) d t,
  \end{align*}
  then, by partial integration, we obtain
  \begin{equation}
    \label{eq:integral}
    \frac{2 (z+1)}{z}  \int_0^1 (1-t)^{z-1} \left( 1+ z - e^{z t} \right) d t = \frac{2 (z+1)}{z} \left( \frac{1+z}{z} - \int_0^1 (1-t)^{z-1}  e^{z t}  d t \right).
  \end{equation}
  A change of variable \(s = 1-t \) gives the first statement of the lemma
  \begin{equation*}
    \frac{2 (z+1)}{z} \left( \frac{1+z}{z} - \frac{e^{z}}{z^z}\int_0^z s^{z-1}  e^{- s}  d s \right)
    = \frac{2 (z+1)}{z} \left( \frac{1+z}{z} - \frac{e^{z}}{z^z}\gamma(z,z) \right).
  \end{equation*}
  For the second statement, recall the expression for the residue term \(R_n\) of the \(n\)-th
  order Taylor's expansion of \(f(x) = e^x\) around \(x =0\) (see e.g. (5.41) of \cite{Whittaker1950}),
  evaluated at \( n\),
  \[
    R_n = \frac{n^n}{(n-1)!} \int_0^1 (1-t)^{n-1} e^{n t} d t = e^n - \sum_{k=0}^{n-1} \frac{n^k}{k!}.
  \]
  Substituting in (\ref{eq:integral}) gives the desired expression
  \begin{equation*}
    \frac{2(n+1)}{n^2} \left( 1+ n - \frac{(n-1)!}{n^{n-1}}\left(e^n - \sum_{k=0}^{n-1}\frac{n^k}{k!} \right) \right).
  \end{equation*}
\end{proof}

Next, we present the proof of Theorem  \ref{thm:exp_cont}. Recall that for any \(a \in \mathbb{C} \), \((a)_n \) denotes the Pochhammer symbol, that is \((a)_0 := 1\), and
\[
  (a)_n := a (a+1) \cdots (a+n-1),
\]
for \(n \in \mathbb{N}\).

\begin{proof}[Proof of Theorem \ref{thm:exp_cont}]
  We obtain the result by computing the  value of the shifted continued fraction
  \(\contFracOpe_{m=1}^{\infty} (\frac{-n(m+n)}{m+2 n + 2})\). By Lemma \ref{lem:recurr},
  a tail sequence \(\{t_j\}_{j=0}^{\infty}\) for the continued fraction is given by
  \[
    t_j = -\frac{(j+n+1)(j+2)}{j+1},    \qquad t_0 = - 2(n+1).
  \]
  Let \(b_j = j + 2 n + 2 \) for \(j \in \mathbb{N}\) and define
  \[
    \Sigma_l := \sum_{k=0}^l \prod_{j=1}^k \left(\frac{b_j + t_j}{-t_j} \right)  =\sum_{k=0}^l \frac{(1)_k (1)_k}{(3)_k (n+2)_k} \frac{n^k}{k!},
  \]
  then, taking the limit and using Lemma \ref{lem:hyper}, we obtain
  \[
    \Sigma_\infty =     {}_2F_2\left(
    \begin{matrix}
      1 ,& 1 \\ 3 ,& n+2
    \end{matrix}\, ;\, n \right) = \frac{2(n+1)}{n^2} \left( 1+ n - \frac{(n-1)!}{n^{n-1}}\left(e^n - \sum_{k=0}^{n-1}\frac{n^k}{k!} \right) \right).
  \]
  From Waadeland's tail theorem it follows that the continued fraction
  \(\contFracOpe_{m=1}^{\infty} \left(\frac{-n(m+n)}{m+2 n+2}\right)\) converges to a finite value \(f_1\) given by
  \[
    f_1 = -2(1+n)\left(1 - \frac{1}{\Sigma_\infty} \right).
  \]
  Clearly, \( \contFracOpe_{m=1}^{\infty} \left(\frac{-n(m+n-1)}{m+2 n + 1}\right)\) is also convergent and
  \begin{align*}
    \contFracOpe_{m=1}^{\infty} \left(\frac{-n(m+n-1)}{m+2 n + 1}\right)
    &= \cfrac{-n^2}{2(1+n) -  2(n+1)(1-\cfrac{1}{\Sigma_\infty})} = \frac{-n^2}{2(1+n)} \Sigma_\infty \\
    &=\frac{(n-1)!}{n^{n-1}}\left(e^n - \sum_{k=0}^{n-1}\frac{n^k}{k!} \right) -(1+ n).
  \end{align*}
  The result follows immediately.
\end{proof}

Next, we establish the explicit expression of the difference between successive convergents and the estimate of the
rate of convergence.

\begin{proposition}
  \label{prop:diff}
  Let \(k \in \mathbb{N}\) and \(C_k = P_k/Q_k \) be the convergent of the continued fraction expansion of \(e^n\) given in
  Theorem \ref{thm:exp_cont}. We have
  \[
    C_k - C_{k-1} = - \frac{n^{n+k+1}}{(n-1)! (n)_{k+1} (k+1)k}
  \]
  In particular, \(n\) divides the numerator of \(C_k - C_{k-1}\).
\end{proposition}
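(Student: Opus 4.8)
The plan is to reduce the difference of successive convergents of $e^n$ to the difference of convergents of the inner continued fraction $\contFracOpe_{m=1}^{\infty}\!\big(\tfrac{a_m}{b_m}\big)$ with $a_m=-n(m+n-1)$ and $b_m=m+2n+1$, and then to evaluate the latter in closed form. Writing $C_k=\sum_{j=0}^{n-1}\tfrac{n^j}{j!}+\tfrac{n^{n-1}}{(n-1)!}\big(1+n+f_k\big)$, where $f_k=p_k/q_k$ denotes the $k$-th convergent of the inner continued fraction, the sum and the constants $\tfrac{n^{n-1}}{(n-1)!}$ and $1+n$ cancel in the difference, so that $C_k-C_{k-1}=\tfrac{n^{n-1}}{(n-1)!}(f_k-f_{k-1})$. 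First I would invoke the standard determinant formula for continued fractions (see e.g.\ \cite{Lorentzen2008}), which gives $f_k-f_{k-1}=\tfrac{(-1)^{k+1}\prod_{j=1}^{k}a_j}{q_k q_{k-1}}$, thereby reducing everything to the two quantities $\prod_{j=1}^{k}a_j$ and $q_k q_{k-1}$.

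The product of the partial numerators is routine: $\prod_{j=1}^{k}a_j=(-n)^k\prod_{j=1}^{k}(j+n-1)=(-1)^k n^k (n)_k$, so that $(-1)^{k+1}\prod_{j=1}^{k}a_j=-n^k(n)_k$. The essential step is a closed-form expression for the denominators $q_k$, which satisfy the recurrence $q_k=(k+2n+1)q_{k-1}-n(k+n-1)q_{k-2}$ together with $q_0=1$ and $q_1=b_1=2(n+1)$. The key observation is that this is precisely the recurrence of Lemma \ref{lem:recurr} after the index shift $k\mapsto k-1$; hence $(k+1)\Gamma(k+n+1)$ is a solution, and since a second-order linear recurrence is determined by two consecutive values, matching $q_0$ and $q_1$ yields $q_k=\tfrac{(k+1)\Gamma(k+n+1)}{\Gamma(n+1)}=\tfrac{(k+1)(n)_{k+1}}{n}$ for all $k\geq 0$.

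Assembling the pieces, $q_k q_{k-1}=\tfrac{k(k+1)(n)_{k+1}(n)_k}{n^2}$, whence $f_k-f_{k-1}=\tfrac{-n^k(n)_k}{q_k q_{k-1}}=\tfrac{-n^{k+2}}{k(k+1)(n)_{k+1}}$, and multiplying by the prefactor $\tfrac{n^{n-1}}{(n-1)!}$ gives the claimed identity $C_k-C_{k-1}=-\tfrac{n^{n+k+1}}{(n-1)!\,(n)_{k+1}(k+1)k}$. The divisibility assertion is then immediate, since the numerator equals $\pm\,n^{n+k+1}$ with exponent $n+k+1\geq 1$. I expect the main obstacle to be the second step, namely correctly recognizing that the denominator recurrence is an index shift of Lemma \ref{lem:recurr} and verifying that the two initial values $q_0,q_1$ match; once the closed form of $q_k$ is in hand, the remainder is bookkeeping.
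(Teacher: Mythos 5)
Your proposal is correct and takes essentially the same approach as the paper: both apply the Euler--Wallis (determinant) formula to the inner continued fraction and substitute the closed form \(Q_k = \tfrac{1}{n}(k+1)(n)_{k+1}\) for its denominators. The only cosmetic difference is that you justify this closed form by recognizing the denominator recurrence as an index shift of Lemma \ref{lem:recurr} and matching the initial values \(q_0, q_1\), whereas the paper states that the solution can be verified directly.
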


\begin{proof}
  From the Euler-Wallis identities, it holds that
  \begin{equation}
    \label{eqn:diff}
    \frac{P_{k}}{Q_{k}} - \frac{P_{k-1}}{Q_{k-1}} = \frac{(-1)^{k-1} a_1 a_2 \ldots a_{k}}{Q_{k}Q_{k-1}} \times \frac{n^{n-1}}{(n-1)!}
    = - \frac{n^{n+k-1} (n)_k }{(n-1)! Q_{k}Q_{k-1}},
  \end{equation}
  with \(a_k = - n ( k + n - 1) \). The denominators \(Q_k\) satisfy the recurrence relation \( Q_k = (k+2 n + 1)Q_{k-1} - n(k+n-1) Q_{k-2}\),
  with initial condition \(Q_{-1}=0, Q_0 =1 \), and it can easily be verified that a solution is given
  by \( Q_k = (1/n)(k+1)(n)_{k+1}\). Replacing this expression in (\ref{eqn:diff}) gives the result.
\end{proof}

The curious property of the convergents of (\ref{eqn:contfrac1}) mentioned in the introduction is just a special case
of the fact that in the continued fraction expansion of \(e^n\), the difference between the numerator of two successive
convergents is divisible by \(n\). The estimate on the rate of convergence follows immediately
from the proposition.

\begin{corollary}
  \label{cor:convspeed}
  Let \(C_k = P_k/Q_k \) be the convergents of the continued fraction expansion of \(e^n\) given in
  Theorem \ref{thm:exp_cont}. We have
  \[
    \left| e^n - C_k \right| = O\left(\frac{n^{k+1}}{(k+1)(k+2)(n)_{k+2}}\right). 
  \]
\end{corollary}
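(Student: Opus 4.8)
The plan is to express the error $e^n - C_k$ as the tail of a telescoping series built from the consecutive differences already computed in Proposition~\ref{prop:diff}, and then to show that this tail is dominated by its first term. Since the partial numerators $-n(m+n-1)$ are negative, the differences $C_j - C_{j-1}$ all carry the minus sign exhibited in Proposition~\ref{prop:diff}, so the convergents $C_k$ decrease monotonically to their limit $e^n$. Consequently $e^n < C_k$ for every $k$, and writing $e^n = C_k + \sum_{j=k+1}^\infty (C_j - C_{j-1})$ gives, with $d_j := \left| C_j - C_{j-1}\right|$,
\[
  \left| e^n - C_k \right| = \sum_{j=k+1}^\infty d_j = \sum_{j=k+1}^\infty \frac{n^{n+j+1}}{(n-1)!\,(n)_{j+1}(j+1)j},
\]
where the last equality is exactly the formula of Proposition~\ref{prop:diff}.

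Next I would isolate the leading term $j = k+1$ and rewrite it as
\[
  d_{k+1} = \frac{n^{n+k+2}}{(n-1)!\,(n)_{k+2}(k+2)(k+1)} = \frac{n^{n+1}}{(n-1)!}\cdot\frac{n^{k+1}}{(k+1)(k+2)(n)_{k+2}}.
\]
For fixed $n$ the prefactor $n^{n+1}/(n-1)!$ is an absolute constant, so this single term already has precisely the order asserted in the statement. It therefore only remains to verify that the full tail sum exceeds its first term by no more than a bounded factor.

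To control the tail I would use a ratio comparison. A direct computation with $(n)_{j+2} = (n)_{j+1}(n+j+1)$ yields
\[
  \frac{d_{j+1}}{d_j} = \frac{n\,j}{(n+j+1)(j+2)} \le \frac{n}{j+2}.
\]
For $j \ge k+1$ this ratio is at most $n/(k+3)$, which is strictly less than $1$ once $k$ is large (the estimate being asymptotic in $k$ with $n$ fixed). Hence the tail is bounded by a geometric series of ratio $r = n/(k+3)$,
\[
  \left| e^n - C_k \right| \le d_{k+1}\sum_{i=0}^\infty r^i = \frac{d_{k+1}}{1-r},
\]
and since $1/(1-r) \to 1$ as $k \to \infty$, we conclude $\left| e^n - C_k \right| = O(d_{k+1})$, which is the claimed bound.

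The ratio computation is entirely routine; the only point demanding a little care—and the main, if modest, obstacle—is the bookkeeping of the big-$O$ convention. The estimate is understood asymptotically in $k$ with $n$ held fixed, so the constant $n^{n+1}/(n-1)!$ coming from the leading term and the factor $1/(1-r) \to 1$ coming from the geometric bound are both legitimately absorbed into the implied constant.
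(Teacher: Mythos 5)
Your proof is correct and is essentially the argument the paper leaves implicit: the paper simply asserts that the corollary ``follows immediately'' from Proposition~\ref{prop:diff}, and your telescoping tail sum, with the first term $d_{k+1}$ exhibiting exactly the claimed order and the ratio bound $d_{j+1}/d_j \le n/(j+2)$ showing the tail is dominated by that term, is the natural way to fill in those details. No gaps: the monotonicity, the ratio computation, and the absorption of the $n$-dependent constants into the big-$O$ (with $n$ fixed) are all handled correctly.
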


\begin{remark}
  The convergents
  \begin{align*}
    C_0 = 3, \quad C_1 = \frac{11}{4}, \quad C_3= \frac{87}{32},
  \end{align*}
  of \eqref{eqn:contfrac1} are also convergents of the regular continued fraction of $e$, that is, they are
  best Lagrange approximations (see e.g. \cite{K2008}). It may be interesting to investigate whether these are the only convergents
  of  \eqref{eqn:contfrac1} that are best Lagrange approximations of $e$. For instance, we verify numerically that among the first $200$
  convergents of the regular continued fraction of $e$ only the convergents $C_0$, $C_1$ and $C_3$ have this property. 
\end{remark}

Let us conclude this section by rewriting Theorem \ref{thm:exp_cont} in an equivalent form giving  the direct evaluation of the continued
fraction appearing in \eqref{eq:contfracen}.
  
\begin{corollary} \label{cor:directCont1}
  For $n\ge1$, we have
  \begin{equation} \label{eq:contFracGen1}
    1 + n + \contFracOpe_{m=1}^{\infty} \left(\frac{-n(m+n-1)}{m+2 n + 1}\right) = \frac{(n-1)!}{n^{n-1}}\left(e^n - \sum_{k=0}^{n-1}  \frac{n^k}{k!}\right).       
  \end{equation}
  
\end{corollary}

  The form given in Corollary \ref{cor:directCont1} is convenient for the generalization of $n$ to a complex variable, as we see in
  \S \ref{sec:ext}.
  
\begin{remark}
  Here, we point out the reason for using the shifted continued fraction in the proof
  of Theorem \ref{thm:exp_cont}. Using the original continued fraction gives a
  tail sequence from the associated recurrence relation (as in the proof of Proposition \ref{prop:diff}).
  However, the initial value of the tail sequence is \(t_0 = -b_1\), hence the sequence does not satisfy
  the hypothesis of Waadeland's theorem. In addition, a direct approach by finding a solution
  of the recurrence relation for the numerators \(P_k\) is more complicated.
\end{remark}

\section{Extension to complex variable}
\label{sec:ext}

The proof of Theorem \ref{thm:exp_cont} suggests that the continued fraction expansion of \(e^n\)
is just a special case of a more general result since Lemma \ref{lem:recurr} still holds when we replace \(n\) with \(z \in \mathbb{C}-\{0,-1,-2,\ldots\}\). This is indeed the case and in this section we discuss the generalization of Corollary \ref{cor:directCont1}.

\begin{theorem}
  \label{thm:incgamma}
  For \( z \in \{ z \in \mathbb{C} : |\arg(z)| < \pi  \} \), we have
  \begin{align}
    \label{eq:incgamma}
    \frac{\gamma(z,z)}{z^{z-1}e^{-z}} &= 1 + z + \contFracOpe_{m=1}^{\infty} \left(\frac{-z(m+z-1)}{m+2 z + 1}\right),
  \end{align}
  pointwise and uniformly in compacts.
\end{theorem}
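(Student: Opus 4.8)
The plan is to mirror the proof of Theorem 2.1, but now keep $z$ as a complex parameter throughout rather than specializing to a positive integer $n$. The whole argument for the integer case never used integrality in an essential way: Lemma 2.2 on the recurrence $X_k = (k+2z+2)X_{k-1} - z(k+z)X_{k-2}$ with solution $X_k = (k+2)\Gamma(k+z+2)$ holds verbatim for $z \in \mathbb{C}\setminus\{0,-1,-2,\ldots\}$ (the verification is a polynomial identity in $z$), and Lemma 2.3 is already stated and proved for $z$ in the cut plane. So the skeleton transfers; the work is to check that the analytic hypotheses of Waadeland's tail theorem are met uniformly on compact subsets of the cut plane.

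First I would record the tail sequence. Exactly as before, Lemma 2.2 gives a tail sequence $\{t_j\}$ for the shifted continued fraction $\contFracOpe_{m=1}^{\infty}\left(\frac{-z(m+z)}{m+2z+2}\right)$, namely
\[
  t_j = -\frac{(j+z+1)(j+2)}{j+1}, \qquad t_0 = -2(z+1),
\]
obtained as $t_j = -X_j/X_{j-1}$. With $b_j = j+2z+2$ I would form the same ratio products $\Sigma_l = \sum_{k=0}^{l}\prod_{j=1}^{k}\frac{b_j+t_j}{-t_j}$, which telescopes into the hypergeometric partial sum $\sum_{k=0}^{l}\frac{(1)_k(1)_k}{(3)_k(z+2)_k}\frac{z^k}{k!}$, so that $\Sigma_\infty = {}_2F_2\!\left(\begin{smallmatrix}1,&1\\3,&z+2\end{smallmatrix};z\right)$. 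Lemma 2.3 then evaluates this $\Sigma_\infty$ in closed form in terms of $\gamma(z,z)/(z^{z-1}e^{-z})$. Applying Waadeland's tail theorem yields the value $f_1$ of the shifted continued fraction, and the same elementary equivalence-transformation algebra used in the integer proof converts this into the value of $\contFracOpe_{m=1}^{\infty}\left(\frac{-z(m+z-1)}{m+2z+1}\right)$, giving precisely the right-hand side of \eqref{eq:incgamma}.

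The main obstacle is verifying the hypotheses of Waadeland's tail theorem with enough uniformity to conclude convergence \emph{uniformly on compacts}, rather than merely pointwise. The theorem requires that $\{t_j\}$ be a tail sequence with $t_j \neq 0$ and that the associated products and the series $\Sigma_\infty$ converge; the delicate point is that the entire function ${}_2F_2$ converges locally uniformly in $z$, and that the tail values $t_j$ stay bounded away from $0$ and from the relevant singular configuration uniformly on a compact $K \subset \{|\arg z|<\pi\}$. I would therefore: (i) fix a compact $K$ avoiding the nonpositive integers and the cut, and bound $|t_j|$ from below and above uniformly in $z \in K$ for all large $j$ (the leading behavior $t_j \sim -j$ makes this routine); (ii) invoke the locally uniform convergence of the ${}_2F_2$ series to get uniform convergence of $\Sigma_l \to \Sigma_\infty$ on $K$; and (iii) check that the resulting limit $f_1$, and hence the continued fraction value, depends continuously on $z$, so that a locally-uniform version of Waadeland's theorem (or a Stieltjes--Vitali/normal-families argument applied to the convergents, which are rational in $z$ and uniformly bounded once convergence is established) upgrades pointwise convergence to uniform-on-compacts. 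I expect step (iii)—packaging the tail-theorem conclusion into a genuinely uniform statement—to require the most care, whereas the algebraic manipulation of the continued fraction and the evaluation via Lemmas 2.2 and 2.3 are immediate given the integer case.
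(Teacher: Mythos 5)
Your proposal is correct and follows essentially the same route as the paper: pointwise, the two lemmas and the integer-case argument carry over verbatim to the cut plane, and uniform convergence on compacts is deduced from the locally uniform convergence of the hypergeometric series. The only difference is that the step (iii) you expect to be delicate is immediate in the paper's proof: Waadeland's theorem identifies the convergents themselves (not just the limit) in terms of the partial sums, yielding the exact bound \(|f(z)-f_k(z)|\leq M\,|\Sigma_\infty(z)-\Sigma_{k-1}(z)|\) with \(M\) a bound for \(\bigl|z^2/(2(z+1))\bigr|\) on the compact set, so no Stieltjes--Vitali or normal-families argument is needed.
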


\begin{proof}
  As mentioned above, it is clear that both lemmas and the proof of the theorem hold for
  \(z\) in the indicated cut plane, so it only remains to prove the uniform convergence in
  compacts. Suppose \(D\) is compact domain in the cut plane, next suppose
  \(M \in \mathbb{R}_{>0} \) is such that
  \[
    \left| \frac{z^2}{2(z+1)} \right| \leq M,
  \]
  for all \( z \in D\). Let \(f_k(z)\) denote the convergents of the continued fraction \eqref{eq:incgamma}
  and \( f(z) \) the pointwise limit of \(f_k(z) \), we have
  \[
    |f(z) - f_k(z)| \leq M |\Sigma_\infty(z) - \Sigma_{k-1}(z)|,
  \]
  where \(\Sigma_{k-1}(z)\) and \(\Sigma_\infty(z) \) are defined in terms of the shifted continued fraction as in the proof of
  Theorem \ref{thm:exp_cont}. The uniform convergence then follows from that of the hypergeometric
  series.
\end{proof}

\begin{corollary}
  \label{cor:confluent}
    For \( z \in \{ z \in \mathbb{C} : |\arg(z)| < \pi  \} \) and \( n \in \mathbb{N} \), it holds that
    \begin{align*}
      {}_1F_1\left(
    \begin{matrix}
      1 \\ z+1
    \end{matrix}\, ;\, z \right)&=  1 + z + \contFracOpe_{m=1}^{\infty} \left(\frac{-z(m+z-1)}{m+2 z + 1}\right)  \\
    \int_0^1 (1-t)^{n-1} e^{t n} d t &= \frac{1}{n}\left( 1 + n + \contFracOpe_{m=1}^{\infty} \left(\frac{-n(m+n-1)}{m+2 n + 1}\right) \right). \\
  \end{align*}
\end{corollary}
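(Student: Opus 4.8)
The plan is to observe that the right-hand side of each identity is precisely the quantity $\frac{\gamma(z,z)}{z^{z-1}e^{-z}}$ computed in Theorem~\ref{thm:incgamma} (with $z=n$ in the second case), so the entire content of the corollary is to identify each left-hand side with this quantity. Neither part requires revisiting the continued fraction itself; both are direct substitutions into the already-established Theorem~\ref{thm:incgamma}.

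For the first identity I would use the classical representation of the incomplete gamma function as a confluent hypergeometric series. Expanding $\gamma(s,x)=x^{s}e^{-x}\sum_{k\ge 0}\frac{x^{k}}{s(s+1)\cdots(s+k)}$ and recognizing $(1)_k=k!$ in the numerator gives
\[
  \gamma(s,x)=\frac{x^{s}e^{-x}}{s}\,{}_1F_1\!\left(\begin{matrix}1\\ s+1\end{matrix}\,;\,x\right).
\]
Setting $s=x=z$ yields $\gamma(z,z)=z^{z-1}e^{-z}\,{}_1F_1(1;z+1;z)$, hence $\frac{\gamma(z,z)}{z^{z-1}e^{-z}}={}_1F_1(1;z+1;z)$, and the first equation follows at once from Theorem~\ref{thm:incgamma}. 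I would remark that on the cut plane the lower parameter $z+1$ never meets a non-positive integer, so ${}_1F_1(1;z+1;z)$ is holomorphic there and the identity is valid on the full stated domain.

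For the second identity the approach is a change of variables in the integral. Substituting $s=1-t$ and then $u=ns$ transforms
\[
  \int_0^1 (1-t)^{n-1}e^{tn}\,dt=\frac{e^{n}}{n^{n}}\int_0^{n}u^{n-1}e^{-u}\,du=\frac{e^{n}}{n^{n}}\,\gamma(n,n).
\]
Specializing Theorem~\ref{thm:incgamma} to $z=n$ gives
\[
  \gamma(n,n)=n^{n-1}e^{-n}\left(1+n+\contFracOpe_{m=1}^{\infty}\left(\frac{-n(m+n-1)}{m+2n+1}\right)\right);
\]
multiplying by $e^{n}/n^{n}$ collapses the prefactor $n^{n-1}e^{-n}\cdot e^{n}/n^{n}$ to $1/n$ and produces exactly the claimed right-hand side.

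There is no genuinely hard step: both statements are corollaries in the strict sense, obtained by inserting a known closed form into Theorem~\ref{thm:incgamma}. The only place demanding care is the hypergeometric--incomplete-gamma identity, where one must correctly match the Pochhammer symbol $(s+1)_k$ in the series denominator against the product $s(s+1)\cdots(s+k)$ so that the leading factor $1/s$ is extracted cleanly; once this bookkeeping is done the result drops out.
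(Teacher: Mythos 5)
Your proposal is correct and matches the paper's (implicit) argument: the corollary is stated without separate proof as an immediate consequence of Theorem~\ref{thm:incgamma}, with the two ingredients you supply --- the classical identity $\gamma(s,x)=s^{-1}x^{s}e^{-x}\,{}_1F_1(1;s+1;x)$ specialized at $s=x=z$, and the change of variables relating $\int_0^1(1-t)^{n-1}e^{tn}\,dt$ to $e^{n}n^{-n}\gamma(n,n)$ --- being exactly the relations already used in the paper's proof of Lemma~\ref{lem:hyper}. Your bookkeeping of the prefactors ($n^{n-1}e^{-n}\cdot e^{n}/n^{n}=1/n$) is also correct, so nothing is missing.
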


It would appear at first glance that the continued fraction expansion of Theorem \ref{thm:exp_cont}
could be extended to a continued fraction for arbitrary powers of \(e\). However, by the proof of
 Theorem \ref{thm:exp_cont} and from Theorem \ref{thm:incgamma} we see that the expansion of \(e^n \) is {\em accidental} in the
sense that it arises from special values of the incomplete gamma function \(\gamma(s,x)\).

Finally, it is interesting to compare the expansion in Corollary \ref{cor:confluent} to the
standard M-fraction representation
\begin{equation}
  \label{eq:confluenthyper1}
      {}_1F_1\left(
    \begin{matrix}
      1 \\ b+1
    \end{matrix}\, ;\, z \right) =  \frac{b}{b-z} \subplus \contFracOpe_{m=1}^{\infty} \left(\frac{m z}{b + m -z}\right),
\end{equation}
for \( z \in \mathbb{C} \) and \( b \in \mathbb{C} - \mathbb{Z}_{\leq 0}  \) (see e.g. (16.1.17) in \cite{Cuyt2008}). For instance, by setting \(b = z \in \{ z \in \mathbb{C} : |\arg(z)| < \pi  \}\) in \eqref{eq:confluenthyper1}  we obtain
\[
   {}_1F_1\left(
    \begin{matrix}
      1 \\ z+1
    \end{matrix}\, ;\, z \right) =  \frac{z}{0} \subplus \contFracOpe_{m=1}^{\infty} \left(\frac{m z}{m}\right),
\]
and is immediate to verify that the expansion is not related to the one in Corollary \ref{cor:confluent} by an equivalence transformation since the convergents are different.

Another known  continued fraction expansion of the
incomplete gamma function (see e.g. \cite{Lorentzen2008} page 280) is given by
\[
  \frac{ \gamma(a,z)}{z^a e^{-z}} = \frac{1}{a} \subminus \frac{a z}{1+a+z} \subplus
  \contFracOpe_{m=1}^{\infty} \left(\frac{-z(m+a )}{1+m+a+z}\right),
\]
valid for $a,z \in \mathbb{C}$. Setting $a=z$, we obtain the continued fraction expansion
\[
  \frac{ \gamma(z,z)}{z^{z-1} e^{-z}} = \frac{1}{1} \subminus \frac{z}{1+2 z} \subplus
  \contFracOpe_{m=1}^{\infty} \left(\frac{-z (m+z )}{1+m+2 z}\right),
\]
with a remarkably similar structure to the one given in Theorem \ref{thm:incgamma}. However, despite the
similarities, the two continued fractions expansions are not related by equivalence transformations or other
type of simple transformations (see also Remark \ref{rem:natural} for another form of \eqref{eq:incgamma} that
might help explain the difference between the two expansions).

\begin{remark}
  \label{rem:other}

  One of the main results of \cite{DZ2021} (Theorem 1), with the notation of the current paper, is the identity
  \begin{equation}
    \label{eq:DZresult}
    1 + k + \contFracOpe_{m=1}^{\infty} \left( \frac{a m}{k+ m + 1} \right) = \frac{a^{a+k+1}}{(a+k)! (e^a - \sum_{s=0}^{a+k} \frac{a^s}{s!})},
  \end{equation}
  for integers $a,k$ with $a+k\geq0$. The identity is also highlighted in \cite{Raayoni2019} as an example of a generalization of continued
  fractions conjectured by the ``Ramanujan machine'' algorithm.

  The continued fraction identity \eqref{eq:DZresult} can be obtained immediately by taking $z=a$ and $b = a + k +1$ in the standard
  M-fraction representation \eqref{eq:confluenthyper1} of the confluent hypergeometric function. Thus, by the remarks above on
  \eqref{eq:confluenthyper1}, it is clear that \eqref{eq:DZresult} does not result in continued fractions expansions of $e^n$ equivalent
  to the ones given in this paper.

\end{remark}

\section{Extension to rational exponent}
\label{sec:extension}

In the previous sections we noted that the limit values of the continued fractions in Theorem \ref{thm:exp_cont} depend essentially on
the special values of the generalized hypergeometric functions given in Lemma \ref{lem:hyper}. It is a natural to attempt
to extend Lemma \ref{lem:hyper} with the purpose of obtaining other continued fraction expansions. In this section, using this idea we
obtain a family of continued fractions for $e^{\frac{\ell}n}$ with \(1\leq\ell<n\). 

\begin{theorem} \label{thm:ratexp}
  Let \(1 \leq \ell < n \) be positive integers, then
  \begin{align}\label{eq:coneln}
    e^{\frac{\ell}{n}} = \sum_{k=0}^{\ell-1} \frac{\ell^k}{k! n^k} + \frac{\ell^{\ell-1}}{(n-1)n^{\ell-1}(\ell-1)!} \left[ 1 - \frac{n}{(n+\ell(n-1))n + (n-1)\contFracOpe_{m=1}^{\infty} \left( \frac{-\ell\, n (m-1+\ell)}{n(m+1+\ell) +\ell} \right)} \right]
  \end{align}
\end{theorem}

Note that, different from \eqref{eq:contfracen}, in \eqref{eq:coneln} the continued fraction, with partial numerators and
denominators with regular patterns, appears in the denominator of the right-hand side. Despite this apparent discrepancy,
the two families of continued fractions are actually related, as we show after the proof of the theorem.

Let us start by making some preparations for proof. As in the integral exponent case, we start by considering a continued fraction
\[
  \contFracOpe_{m=1}^{\infty} \left( \frac{- z (m+n z)}{m+ (n+1) z + 2} \right)
\]
with \(z,n \in \mathbb{C}\). The associated recurrence relation is then given by
\[
  X_k = (k + z (n+1) + 2) X_{k-1} - z( k+ n z) X_{k-2},
\]
with particular solution
\[
  X_k = \Gamma(k + 2 + n z) (k+2 + (n-1) z).
\]

As in Section \ref{sec:cont}, we define
\[
  t_j = - \frac{X_j}{X_{j-1}} = \frac{-(j+1+n z)(j+2+(n-1)z)}{(j+1+(n-1)z)},
\]
leading to
\[
  \Sigma_\infty =  {}_2F_2\left(
    \begin{matrix}
      (n-1)z +1  ,& 1 \\ n z +2 ,& (n-1)z+3
    \end{matrix}\, ;\, z \right).
\]
Next, we set \(z = \ell/n\) with \(1 \leq \ell < n \) and consider the corresponding special values of
the hypergeometric function above (c.f. Lemma \ref{lem:hyper}).

\begin{lemma} \label{lem:spv2}
  Let \(1 \le \ell < n \) be positive integers, then
  \begin{align*}
    &{}_2F_2\left(
    \begin{matrix}
      \frac{n-1}{n}\ell + 1  ,& 1 \\ \ell+2 ,& \frac{n-1}{n}\ell+3
    \end{matrix}\, ;\, \frac{\ell}{n} \right) \\
    & = (\ell+1)!\frac{ n^{\ell+1}}{\ell^{\ell+1}} \frac{\Gamma(\frac{n-1}{n}\ell+3)}{\Gamma(\frac{n-1}{n}\ell+1)} \left[ \frac{n}{\ell}\left( \sum_{k=0}^{\ell} \frac{\ell^k}{k! n^k} - e^{\frac{\ell}n}\right)
      + \frac{\ell^{\ell-1}}{(\ell-1)! n^{\ell-1}} \left( \frac{1}{n + \ell(n-1)} \right) \right].
  \end{align*}
\end{lemma}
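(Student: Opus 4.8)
The plan is to mirror the proof of Lemma \ref{lem:hyper}, applying Euler's integral transform twice to the ${}_2F_2$ and then specializing $z=\ell/n$ so that the parameter $nz=\ell$ becomes a positive integer. Write $\beta:=\ell/n\in(0,1)$ and abbreviate $g(t):=e^{\beta t}-\sum_{k=0}^{\ell}\frac{(\beta t)^k}{k!}$. Applying the transform first with $(c,d)=(1,nz+2)$ and then with $(c,d)=((n-1)z+1,(n-1)z+3)$ reduces the series to the double integral $(nz+1)\frac{\Gamma((n-1)z+3)}{\Gamma((n-1)z+1)}\int_0^1\int_0^1 (1-t)^{nz}s^{(n-1)z}(1-s)e^{zts}\,ds\,dt$, exactly as in Lemma \ref{lem:hyper} (which is the case $n=1$). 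Setting $z=\ell/n$ makes $(1-t)^{nz}=(1-t)^{\ell}$ a polynomial, so the $t$-integral evaluates in closed form by the elementary identity $\int_0^1(1-t)^{\ell}e^{at}\,dt=\frac{\ell!}{a^{\ell+1}}\big(e^{a}-\sum_{k=0}^{\ell}\frac{a^k}{k!}\big)$. After simplification the prefactor collapses to $(\ell+1)!\frac{n^{\ell+1}}{\ell^{\ell+1}}\frac{\Gamma(\frac{n-1}{n}\ell+3)}{\Gamma(\frac{n-1}{n}\ell+1)}$, matching the stated constant, and the problem is reduced to evaluating the single integral $I:=\int_0^1 t^{-\beta-1}(1-t)g(t)\,dt$.

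Second, I would compute $I$ by a single integration by parts, writing $t^{-\beta-1}=-\tfrac1\beta\frac{d}{dt}t^{-\beta}$, exactly the step that takes the $1/t^2$ integral to \eqref{eq:integral} in Lemma \ref{lem:hyper}. The boundary terms vanish because $g(t)=O(t^{\ell+1})$ as $t\to0$ and $(1-t)$ kills the endpoint $t=1$, and using $g'(t)=\beta g(t)+\frac{\beta^{\ell+1}t^{\ell}}{\ell!}$ one obtains $I=\frac{\beta-1}{\beta}B_0-B_1+\frac{\beta^{\ell}}{\ell!(\ell+1-\beta)(\ell+2-\beta)}$, where $B_m:=\int_0^1 t^{m-\beta}g(t)\,dt$. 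Since the exponents $m-\beta$ exceed $-1$, each $B_m$ legitimately splits into an exponential part $E_m:=\int_0^1 t^{m-\beta}e^{\beta t}\,dt$ and an elementary polynomial part.

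Third, the transcendental pieces must be shown to cancel. The $E_m$ are non-elementary incomplete gammas, but a further integration by parts gives the relation $(1-\beta)E_0=e^{\beta}-\beta E_1$, from which the combination $\frac{\beta-1}{\beta}E_0-E_1$ collapses to $-e^{\beta}/\beta$; thus all incomplete-gamma contributions disappear and only $e^{\beta}=e^{\ell/n}$ survives. For the polynomial parts I would use the partial-fraction identity $\frac{k+2-2\beta}{(k+1-\beta)(k+2-\beta)}=\frac{1-\beta}{k+1-\beta}+\frac{\beta}{k+2-\beta}$ together with the telescoping identity $(k+1-\beta)c_k=\beta^k/k!$ for $c_k:=\frac{\beta^k}{k!(k+1-\beta)}$; this collapses the resulting finite sum to $\frac1\beta\sum_{k=0}^{\ell}\frac{\beta^k}{k!}$ plus a single leftover rational term. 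Recombining with the Beta-type term from the integration by parts and rewriting $\beta=\ell/n$ produces exactly the bracket on the right-hand side, after checking the elementary identity $(n+1)(\ell+1)-1-2\ell=(n-1)\ell+n=\ell(\ell+1-\beta)/\beta$.

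I expect the main obstacle to be bookkeeping the non-integer exponent $-\beta$ (equivalently $(n-1)\ell/n$): it blocks the naive term-by-term splitting of $I$, since the individual $e^{\beta t}$ and polynomial integrals diverge at $t=0$, so the integration by parts must be carried out before any splitting, and it forces the intermediate integrals $E_0,E_1,B_0,B_1$ to be genuinely transcendental. The content of the lemma is precisely that these transcendental incomplete-gamma terms cancel in the right linear combination and that the remaining rational sum telescopes to a single term; verifying these two cancellations cleanly—rather than the Euler-transform reduction, which is routine—will be where the real work lies.
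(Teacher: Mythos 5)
Your proposal is correct and takes essentially the same route as the paper: the Euler-transform reduction lands on the same single integral $\int_0^1 t^{-\ell/n-1}(1-t)\bigl(e^{t\ell/n}-\sum_{k=0}^{\ell}\tfrac{(t\ell)^k}{n^k k!}\bigr)\,dt$ with the same prefactor, your integration by parts is the paper's ``partial integration,'' and your cancellation identity $(1-\beta)E_0+\beta E_1=e^{\beta}$ is precisely the paper's displayed identity $\ell\int_0^1 t^{1-\ell/n}e^{t\ell/n}\,dt=ne^{\ell/n}-(n-\ell)\int_0^1 t^{-\ell/n}e^{t\ell/n}\,dt$. The only difference is cosmetic bookkeeping at the end: the paper expands $(1-t)g'(t)-g(t)$ fully into monomials so each factor $(k+1-\ell/n)$ cancels against $\int_0^1 t^{k-\ell/n}\,dt$ with no telescoping, whereas you keep $g$ intact and recover the same sum via partial fractions and telescoping.
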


\begin{proof}
  First, by Euler's integral transform, we have
  \begin{equation}
    \label{eq:fracp1}
    {}_2F_2\left(
      \begin{matrix}
        \frac{n-1}{n}\ell + 1  ,& 1 \\ \frac{n-1}{n}\ell+3  ,&  \ell+2
      \end{matrix}\, ;\, \frac{\ell}{n} \right) =
    \frac{\Gamma(\frac{n-1}{n}\ell+3)}{\Gamma(\frac{n-1}{n}\ell+1)} \int_{0}^1 t^{\frac{n-1}{n}\ell} (1-t) \, 
    {}_1F_1\left(
      \begin{matrix}
        1 \\ \ell +2
      \end{matrix}\, ;\, \frac{t \ell}{n} \right) d t.
  \end{equation}

  Moreover, by another application of the integral transform we see that
  \[
     {}_1F_1\left(
    \begin{matrix}
      1 \\ \ell +2
    \end{matrix}\, ;\, \frac{t \ell}{n} \right) = \frac{(\ell+1)n^{\ell+1}}{\ell^{\ell+1} t^{\ell+1}} e^{\frac{t \ell}{n}} \gamma\left(\ell+1, \frac{\ell t}{n}\right),
  \]
  and since
  \[
    \gamma\left(\ell+1, \frac{\ell t}{n}\right) = \ell! \left(1 - e^{-\frac{t \ell}{n}} \sum_{k=0}^{\ell} \frac{(t \ell)^k}{n^k k!}  \right),
  \]
  we obtain
  \[
    {}_1F_1\left(
    \begin{matrix}
      1 \\ \ell +2
    \end{matrix}\, ;\, \frac{t \ell}{n} \right) = \frac{(\ell+1)! n^{\ell+1}}{\ell^{\ell+1} t^{\ell+1}} \left( e^{\frac{t \ell}{n}} - \sum_{k=0}^{\ell} \frac{(t \ell)^k}{n^k k!}  \right).
  \]
  Moreover, by setting
  \[
    C_{n,\ell} := \frac{(\ell+1)! n^{\ell+1}}{\ell^{\ell+1}} \frac{\Gamma(\frac{n-1}{n}\ell+3)}{\Gamma(\frac{n-1}{n}\ell+1)},
  \]
  the integral in \eqref{eq:fracp1} is given by
  \[
    C_{n,\ell} \int_0^1 t^{-\frac{\ell}{n}-1} (1-t) \left( e^{\frac{t \ell}{n}} - \sum_{k=0}^{\ell} \frac{(t \ell)^k}{n^k k!}  \right) d t,
  \]
  by partial integration, this is equal to
  \begin{equation}
    \label{eq:eqlemma3}
    C_{n,\ell} \frac{n}{\ell} \int_0^1 t^{-\frac{\ell}n} \left( - \frac{1}{n}(e^{t\frac{\ell}n} (n + \ell(t-1)) + \frac{(\ell+1)\ell^{\ell} t^{\ell}}{n^\ell \ell!}  + \sum_{k=0}^{\ell-1} \frac{t^k \ell^k}{k! n^k} \left( (k+1) -\frac{\ell}{n}\right) \right) d t.
  \end{equation}
  Next, we notice that
  \[
    \int_0^1 t^{-\frac{\ell}n} e^{t\frac{\ell}{n}} (\ell(t-1) + n)d t = n e^{t\frac{\ell}{n}},
  \]
  since
  \[
    \ell \int_0^1 t^{-\frac{\ell}n+1} e^{t\frac{\ell}{n}} dt = n e^{\frac{\ell}n} - (n-\ell) \int_{0}^1 t^{-\frac{\ell}n} e^{t \frac{\ell}n} dt
  \]
  holds for \(1 \leq \ell < n \).
  Therefore, the integral \eqref{eq:eqlemma3} is equal to
  \[
    C_{n,\ell} \left[ \frac{n}{\ell} \left( \sum_{k=0}^\ell \frac{\ell^k}{k! n^k} - e^{\frac{\ell}{n}} \right) + \frac{\ell^\ell}{(\ell-1)!n^{\ell-1}}\left( \frac{1}{n + \ell(n-1)}\right)  \right],
  \]
  as desired.
\end{proof}

With these preparations, we give the proof of the main result of the section.

\begin{proof}[Proof of Theorem \ref{thm:ratexp}]

  Let us consider the continued fraction
  \[
    \contFracOpe_{m=1}^{\infty} \left( \frac{- z (m-1+n z)}{m+ (n+1) z + 1} \right),
  \]
  then, by setting \(z = \frac{\ell}n\) with \(1 \le \ell < n \), as before, and using an equivalent transformation
  and the Waadeland tail theorem (cf. proof of Theorem \ref{thm:exp_cont}), along with the fact
  \[
    \frac{(\ell+1)! n^{\ell+1}}{\ell^{\ell+1}} \frac{\Gamma(\frac{n-1}{n}\ell+3)}{\Gamma(\frac{n-1}{n}\ell+1)} = \frac{(\ell+1)! n^{\ell-1}(n + \ell(n-1))(n-1)(\ell+1)}{\ell^{\ell+1}},
  \]
  we obtain
  \begin{align*}
    \frac{1}{n} \contFracOpe_{m=1}^{\infty}& \left( \frac{-\ell\, n (m-1+\ell)}{n(m+1) + (n+1)\ell} \right) = -\frac{n + \ell(n-1)}{n-1} \\
                                      &\qquad - \frac{\ell^{\ell-1}n}{ (\ell-1)! (n-1)^2 n^\ell (e^{\frac{\ell}n} - \sum_{k=0}^\ell \frac{\ell^k}{k! n^k} ) - \ell^{\ell-1}(n-1)},
  \end{align*}
and the result is obtained by factoring $e^{\frac{\ell}n}$.
\end{proof}

As mentioned above, Theorem \ref{thm:exp_cont} may be considered as a limit case of Theorem \ref{thm:ratexp}. Indeed, the identity \eqref{eq:coneln} is equivalent to
  \begin{align} \label{eq:elnrewriten}
    e^{\frac{\ell}{n}} = \sum_{k=0}^{\ell-1} \frac{\ell^k}{k! n^k} + \frac{\ell^{\ell-1}}{n^{\ell-1}(\ell-1)!} \left[\frac{n(\ell+1) + n\contFracOpe_{m=1}^{\infty} \left( \frac{-\ell\, n (m-1+\ell)}{n(m+1+\ell) +\ell} \right) }{(n+\ell(n-1))n + (n-1)\contFracOpe_{m=1}^{\infty} \left( \frac{-\ell\, n (m-1+\ell)}{n(m+1+\ell) +\ell} \right)} \right],
  \end{align}
  and by setting $n=1$ we recover the result of Theorem \ref{thm:exp_cont}. We note that taking $n=1$ in \eqref{eq:elnrewriten} is allowed, since in this case Lemma \ref{lem:spv2} reduces to Lemma \ref{lem:hyper}. The convergence of other cases with $1 \neq n \leq \ell$ need to be considered by separate and we do not further investigate this issue in this paper.

  The following result, giving the explicit limit value of two-parameter families of continued fractions is obtained immediately from Theorem \ref{thm:ratexp}.

\begin{corollary} \label{cor:cfvalue2}
  For $1 \leq \ell< n$,  or $n=1$ (cf. \eqref{eq:contFracExpN}), we have
  \[
    n(\ell+1) + \ell + \contFracOpe_{m=1}^{\infty} \left( \frac{-\, n \ell( m -1 + \ell)}{n (m + \ell+1) +\ell} \right) =
    \frac{\ell^\ell + (n-\ell(n-1))(\ell-1)! n^{\ell-1} \left( e^{\frac{\ell}{n}} - \sum_{k=0}^{\ell-1} \frac{\ell^k}{k! n^k} \right) }{\ell^{\ell-1} - (n-1)(\ell-1)! n^{\ell-1} \left( e^{\frac{\ell}{n}} - \sum_{k=0}^{\ell-1} \frac{\ell^k}{k! n^k} \right)}
  \]
\end{corollary}

We remark that all the coefficients in the right-hand side of the identity of the corollary above are integers.

\begin{example}
  Corollary \ref{cor:cfvalue2} has a particular simple expression for the case \(\ell=1\). Namely,
  \[
    1+ 2n + \contFracOpe_{m=1}^{\infty} \left( \frac{-\, n m}{1 + n (m + 2)} \right) = \frac{e^{\frac{1}{n}}}{n-(n-1)e^{\frac{1}{n}}}.
  \]

  From the point of view of the patterns on the partial numerator and denominators in the continued fraction, this family may be
  considered as the natural generalization of \eqref{eqn:contfrac1}. Indeed, as we mentioned in the introduction, the case $n=1$ is equal
  to \eqref{eqn:contfrac1}, and, for instance, we have
  \begin{align*}
    \frac{e^{\frac{1}{2}}}{2-e^{\frac{1}{2}}} &= 5 - \frac{2}{7} \subminus \frac{4}{9} \subminus \frac{6}{11} \subminus \frac{8}{13} \subminus \subcdots, \\
    \frac{e^{\frac{1}{3}}}{3-2e^{\frac{1}{3}}} &= 7 - \frac{3}{10} \subminus \frac{6}{13} \subminus \frac{9}{16} \subminus \frac{12}{19} \subminus \subcdots
  \end{align*}
  and similar ones for $n\geq4$.
\end{example}

\begin{example}
  Let us give also the explicit identities of Corollary \ref{cor:cfvalue2}  for the cases $\ell=2,3$. Concretely, we have
  \[
    2 + 3 n + \contFracOpe_{m=1}^{\infty} \left( \frac{-\, n( m +1)}{2+ n (m + 3)} \right) = \frac{n+(2-n)e^{\frac{2}{n}}}{n+1 - (n-1)e^{\frac{2}{n}}},
  \]
  valid for $n>2$, and
  \[
    3+ 4 n + \contFracOpe_{m=1}^{\infty} \left( \frac{-\, n( m + 2)}{3+ n (m +4)} \right) =  \frac{2n(3+2n)+2n(3-2n)e^{\frac{3}{n}}}{3+4n+2n^2 - 2n(n-1)e^{\frac{3}{n}}}
  \]
  for $n>3$.
\end{example}

It is immediate to verify that the families of continued fractions given in this section are not equivalent to the standard M-fraction representations of the confluent hypergeometric function.

We leave the discussion on the convergence rate and analytic properties of the families of Theorem \ref{thm:ratexp} and Corollary \ref{cor:cfvalue2} for another occasion.

\begin{remark}\label{rem:natural}
  The case $n=1$ in Corollary \ref{cor:cfvalue2} gives
  \begin{equation}\label{eq:contFracExpN}
    1 + 2n + \contFracOpe_{m=1}^{\infty} \left(\frac{-n(m+n-1)}{m+2 n + 1}\right) = \frac{(n-1)!\left(e^n - \sum_{k=0}^{n-1}  \frac{n^k}{k!} \right) + n^n}{n^{n-1}}.
  \end{equation}
  which is an equivalent form of Corollary \eqref{eq:contFracExpN}. Similarly, it may be more natural to consider
  \begin{align*}
    1 + 2z + \contFracOpe_{m=1}^{\infty} \left(\frac{-z(m+z-1)}{m+2 z + 1}\right) =  \frac{\gamma(z,z)}{z^{z-1}e^{-z}} + z,
  \end{align*}
  in Theorem \ref{thm:incgamma}.
\end{remark}

\section*{Acknowledgments}

The work was supported by JST CREST, Japan [Grant Number JPMJCR14D6]. The author is grateful to Masato Wakayama
and Hiroyuki Ochiai for comments and discussion regarding this research. In addition, the author is grateful to
J.H. McCabe and Gerardo González Robert for comments on preliminary versions of the article.

\vspace{1em}

\begin{flushleft}
  Cid Reyes-Bustos \par
  Department of Mathematical and Computing Science, School of Computing, \par
  Tokyo Institute of Technology \par
  2 Chome-12-1 Ookayama, Meguro, Tokyo 152-8552 JAPAN \par\par
  \texttt{reyes@c.titech.ac.jp}
\end{flushleft}


\begin{thebibliography}{99}

\bibitem{AAR1999}
  G.~E.~Andrews, R.~Askey and R.~Roy:
  Special functions,
  Encyclopedia of Mathematics and its Applications, Cambridge University Press, 1999.

\bibitem{Cuyt2008}
  A.~Cuyt, V.~Brevik~Petersen, B.~Verdonk, H.~Waadeland and W.~B.~Jones:
  Handbook of Continued Fractions for Special Functions,
  Springer Science+Business Meida B.V., 2008.

\bibitem{DZ2021}
  R.~Dougherty-Bliss and  D.~Zeilberger:
  {\textit Automatic conjecturing and proving of exact values of some infinite families of infinite continued fractions.}
  Ramanujan J. (2021). Published Online 24 February 2021. \texttt{https://doi.org/10.1007/s11139-020-00345-z}.
  
\bibitem{KRW2017}
  K.~Kimoto, C.~Reyes-Bustos and M.~Wakayama:
  \textit{Determinant expressions of constraint polynomials and degeneracies of the asymmetric quantum Rabi model}.
  International Mathematics Research Notices, 2020 (Published Online April 20, 2020). \texttt{https://doi.org/10.1093/imrn/rnaa034}.

\bibitem{K2008}
  S.~Khrushchev:
  Orthogonal Polynomials and Continued Fractions, From Euler's Point of View,
  Encyclopedia of Mathematics and its Applications 122, Cambridge University Press, 2008.

        
 \bibitem{Lorentzen2008}
  L.~Lorentzen~and~H.~Waadeland:
  Continued Fractions,
  Atlantis Studies in Mathematics for Engineering and Science 1,
  Atlantic Press/World Scientific 2008.

\bibitem{McCabe1983}
  J.~H.~McCabe:
  \textit{The Quotient-Difference Algorithm and the Padé Table: An Alternative Form and a General
    Continued Fraction}, Math. Comp. {\bf 41}, 163, (1983). 183-197.

\bibitem{McCabe2009}
  J.~H.~McCabe:
  \textit{On the Padé table for $e^x$ and the simple continued fractions for $e$ and $e^{L/M}$},
  Math. Comp. {\bf 41}, 163, (1983). 183-197.
  
\bibitem{Raayoni2019}
  G.~Raayoni, S.~Gottlieb, Y.~Manor, G.~Pisha, Y.~Harris, U.~Mendlovic, D.~Haviv, Y.~Hadad and I.~Kaminer:
  \textit{Generating conjectures on fundamental constants with the Ramanujan Machine},
  Nature {\bf 590}, 67–73 (2021). \texttt{https://doi.org/10.1038/s41586-021-03229-4}.

\bibitem{RB2018}
  C.~Reyes-Bustos:
  \textit{Study on the spectrum of the asymmetric quantum Rabi model},
  PhD Thesis, Kyushu University, (2018). 
  \texttt{https://catalog.lib.kyushu-u.ac.jp/opac\_download\_md/1959076/math0234.pdf}  

\bibitem{CRB2020}
 C.~Reyes-Bustos,
 \textit{Extended divisibility relations for constraint polynomials of the asymmetric quantum Rabi model},
 in ``International Symposium on Mathematics, Quantum Theory, and Cryptography (MQC 2019)'', eds. T. Takagi et al.
 Mathematics for Industry \textbf{33}, 149-168, Springer Singapore, 2020.
  
\bibitem{Whittaker1950}
  E.~T.~Whittaker~and~G.~N.~Watson:
  A Course of Modern Analysis,
  Fourth edition, Cambridge Mathematical Library,
  Cambridge University Press, 1950.

\bibitem{Lu2019}
  Z.~Lu:
  \textit{Elementary proofs of generalized continued fraction formulae for $e$},
  arXiv:1907.05563, 2019.

\end{thebibliography}
\end{document}